\DeclareMathOperator{\K}{K}
\DeclareMathOperator{\Cl.K.dim}{Cl.K.dim}
\DeclareMathOperator{\C}{C}
\DeclareMathOperator{\Spec}{Spec}
\newtheorem{Theorem}{Theorem}
\newtheorem{corollary}{Corollary}[section]
\newtheorem{theorem}[corollary]{Theorem}
\newtheorem{lemma}[corollary]{Lemma}
\newtheorem{proposition}[corollary]{Proposition}
\theoremstyle{definition}
\newtheorem{definition}[corollary]{Definition}
\newtheorem{remark}[corollary]{Remark}
\title{Rings whose subrings are all Noetherian or Artinian}
\author{Nathan Blacher}
\date{}
\begin{document}

\maketitle
\vspace*{-50pt}
\begin{abstract}
We study noncommutative rings whose proper subrings all satisfy the same chain condition. We show that if every proper subring of a ring $R$ is right Noetherian, then $R$ is either right Noetherian or the trivial extension of $\mathbb{Z}$ by the Prüfer $p$-group for a prime $p$. We also prove that if every proper subring of $R$ is right Artinian, then $R$ is either right Artinian or $\mathbb{Z}$. For commutative rings, both results were proved by Gilmer and Heinzer in 1992. Our result for right Artinian subrings only generalises the absolute case of their commutative result. We generalise the full result (when only certain subrings are right Artinian) in the context of PI rings.
\end{abstract}

\section{Introduction and Notation}

Subrings naturally inherit some ring-theoretic properties, such as being a domain, the ring's characteristic and satisfying a given polynomial identity. Chain conditions however can differ widely between a ring and its subrings. For instance a field can contain a subring which is not Noetherian. In \cite{GilmerHeinzer_HereditarilyNoetherianRings}, Gilmer and Heinzer characterised commutative Noetherian rings whose subrings are all Noetherian. In \cite{GilmerHeinzer_JonssonModules}, they showed that a commutative ring whose proper subrings are all Noetherian is either Noetherian or isomorphic to the trivial extension of $\mathbb{Z}$ by the Prüfer $p$-group for a prime $p$. In the same paper they prove that a commutative ring whose proper subrings are all Artinian is either Artinian or isomorphic to $\mathbb{Z}$. We generalise both latter results to all rings, without the commutative hypothesis.

\begin{Theorem}\label{MinimalNonNoetherian}
Let $R$ be a ring which is not right Noetherian. Suppose that every proper subring of $R$ is right Noetherian. Then $R$ is the trivial extension of $\mathbb{Z}$ by the Prüfer $p$-group $\mathbb{Z}\left(p^\infty\right)$ for some prime $p$.
\end{Theorem}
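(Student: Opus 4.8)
The plan is to isolate all of the failure of the ascending chain condition inside a single square-zero two-sided ideal and then to exploit the rigidity of the Prüfer group. Throughout I use that rings and their subrings are unital, so that every subring contains the prime subring $\mathbb{Z}\cdot 1$; this convention is in fact forced, since otherwise $2\mathbb{Z}\ltimes\mathbb{Z}(p^\infty)$ would be a proper non-right-Noetherian subring of the intended conclusion. The whole argument reduces to producing an \emph{infinite} two-sided ideal $J$ with $J^2=0$ whose additive group is torsion and has all of its proper subgroups finite; everything else is then short, as I explain next.

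Suppose such a $J$ has been found. The classical characterisation of quasicyclic groups---an infinite abelian group all of whose proper subgroups are finite is a Prüfer group---gives $(J,+)\cong\mathbb{Z}(p^\infty)$ for some prime $p$. Since $\mathbb{Z}(p^\infty)$ is unbounded, no nonzero integer annihilates it, so $m\cdot 1=0$ would force $mJ=0$; hence $\operatorname{char}R=0$, the prime subring is $\mathbb{Z}\cdot 1\cong\mathbb{Z}$, and $\mathbb{Z}\cdot 1\cap J=0$ because $J$ is torsion. Because $J^2=0$ and $1$ is central, $S:=\mathbb{Z}\cdot 1+J$ is a unital subring, and the map $(n,x)\mapsto n\cdot 1+x$ is a ring isomorphism $\mathbb{Z}\ltimes\mathbb{Z}(p^\infty)\xrightarrow{\ \sim\ }S$. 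This $S$ is not right Noetherian: finitely many elements of $J$ generate, as a right ideal of $S$, only the subgroup they span (the cross terms lie in $J^2=0$), which is finite, whereas $J$ is infinite. By hypothesis a non-right-Noetherian subring cannot be proper, so $S=R$ and $R\cong\mathbb{Z}\ltimes\mathbb{Z}(p^\infty)$; commutativity of $R$ then comes for free, and one need not even invoke the commutative theorem of Gilmer and Heinzer \cite{CommutativeMinimalNonNoetherianOrArtinian}.

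The engine that makes $J$ controllable is the same subring construction run one level down: once $J^2=0$, for \emph{any} subgroup $H\le(J,+)$ the set $\mathbb{Z}\cdot 1+H$ is again a unital subring, and by the computation above it is right Noetherian precisely when $H$ is finitely generated as an abelian group. Thus the hypothesis translates directly into a group-theoretic statement about $(J,+)$, which for torsion $J$ reads ``every proper subgroup is finite''---exactly the input to the quasicyclic characterisation. The real difficulty, and the step I expect to be the main obstacle, is producing $J$ in the first place: showing that the failure of the right-ACC is concentrated in an infinite torsion square-zero two-sided ideal. In the commutative setting this is where a Cohen-type analysis of ideals maximal among the non-finitely-generated ones enters; noncommutatively, a right ideal maximal among non-finitely-generated ones need not be two-sided, so one must instead work with left and right annihilators (whence $\Ann$ and its one-sided variants in the notation), bound the classical Krull dimension, and rule out positive characteristic, in order to descend to a genuine two-sided square-zero ideal. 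I would carry this out by first reducing to the case where $R$ is not finitely generated as a ring---so that every finitely generated subring is proper and hence right Noetherian, making $R$ a directed union of right Noetherian subrings---and then locating the obstruction as an infinite square-zero two-sided ideal sitting inside the torsion part of $R$.
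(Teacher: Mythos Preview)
Your reduction in the second paragraph is correct: once an infinite torsion square-zero two-sided ideal $J$ whose proper additive subgroups are all finite is in hand, the conclusion follows as you say. The difficulty is that you are essentially \emph{assuming} the conclusion---an infinite abelian torsion group with all proper subgroups finite already \emph{is} a Pr\"ufer group---and you never actually produce such a $J$. Your final paragraph acknowledges this gap but offers only a plan (``locating the obstruction \ldots\ inside the torsion part of $R$'') without carrying it out: you do not explain how to obtain a two-sided square-zero ideal, how to exclude positive characteristic, or why your candidate $J$ should be torsion before you already know it is $\mathbb{Z}(p^\infty)$. Your correct observation that a right ideal maximal among the non-finitely-generated ones need not be two-sided is exactly the obstruction, and nothing in the proposal overcomes it.

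The paper fills precisely this gap, taking $J=N$ the prime radical. The substantive steps are: (i)~Proposition~\ref{SquareIsZero} and Lemma~\ref{SquareIn}, which show that a non-finitely-generated right ideal $I$ has $I^2\subset(I\cap C)R$ and that $I^2=0$ forces $R$ to be commutative; (ii)~Proposition~\ref{PrimeCaseMinlNotRightNoeth}, which shows that a prime $R$ satisfying the hypothesis must be right Noetherian, so the failure of ACC is concentrated in $N$; (iii)~Proposition~\ref{MinlNotRightNoethArtinianSub}, which rules out positive characteristic by showing that an Artinian prime subring would force $R$ to be right Noetherian. These combine in Proposition~\ref{MinlNotRightNoethReducedSub} to give $R=\mathbb{Z}\oplus N$ with $N^2=0$, whence $R$ is commutative, and the paper then simply quotes Theorem~\ref{CommutativeMinimalNonNoetherian}. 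Your wish to bypass the commutative theorem via the quasicyclic classification is viable once $R=\mathbb{Z}\oplus N$ with $N^2=0$ has been established---your subring ``engine'' then shows every proper subgroup of $(N,+)$ is finitely generated, and a short divisibility argument forces such an $N$ to be Pr\"ufer---but the heavy lifting, namely steps (i)--(iii), is absent from your proposal.
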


The proof of Theorem~\ref{MinimalNonNoetherian} relies on the commutative version and follows the strategy of \cite{GilmerHeinzer_JonssonModules}, with the necessary changes to adapt it to the noncommutative setting.

\begin{Theorem}\label{MinimalNonArtinian}
Let $R$ be a ring which is not right Artinian. Suppose that every proper subring of $R$ is right Artinian. Then $R$ is isomorphic to $\mathbb{Z}$.
\end{Theorem}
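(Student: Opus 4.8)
The plan is to argue by the characteristic of $R$, using the single principle that a ring which fails the descending chain condition on right ideals must contain proper subrings that also fail it. I take all rings and subrings to be unital (a subring contains $1_R$); this is forced by the statement itself, since under this convention $\mathbb{Z}$ has no proper subrings and so can genuinely be the extremal example. Let $P=\mathbb{Z}\cdot 1_R$ be the prime subring. If $\operatorname{char}R=0$ then $P\cong\mathbb{Z}$, which is not right Artinian (witness $\mathbb{Z}\supsetneq 2\mathbb{Z}\supsetneq 4\mathbb{Z}\supsetneq\cdots$); since every proper subring is right Artinian by hypothesis, $P$ cannot be proper, whence $P=R$ and $R\cong\mathbb{Z}$. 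So the entire problem is to eliminate positive characteristic.

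Suppose $\operatorname{char}R=n>0$. As $R$ is not right Artinian, fix an infinite strictly descending chain of right ideals $I_1\supsetneq I_2\supsetneq\cdots$, and for each $k$ set $S_k=P+I_k$. Using only that $I_k$ is a right ideal (left multiplication by an element of $P$ is repeated addition, so it preserves $I_k$), one checks directly that $S_k$ is a subring containing $1_R$. Every right ideal of $R$ contained in $I_k$ is again a right ideal of $S_k$, since $JS_k\subseteq JR\subseteq J$; hence the tail $I_k\supsetneq I_{k+1}\supsetneq\cdots$ is an infinite descending chain of right ideals of $S_k$, so $S_k$ is not right Artinian. Since every proper subring is right Artinian, the non-right-Artinian ring $S_k$ must equal $R$, for every $k$. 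But $S_k=R$ says that $R/I_k$ is generated as an abelian group by $1_R+I_k$, an element of additive order dividing $n$, so $|R/I_k|\le n$; and $I_k\supsetneq I_{k+1}$ forces $|R/I_k|<|R/I_{k+1}|$. Thus the finite indices $|R/I_k|$ strictly increase while remaining bounded by $n$, which is absurd. Positive characteristic is therefore impossible, and together with the previous paragraph this proves $R\cong\mathbb{Z}$.

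The crux, and the place where the noncommutative setting must be handled with care, is the positive-characteristic case and specifically the construction $S_k=P+I_k$: this is what converts the purely module-theoretic failure of the chain condition into genuine \emph{subrings}, and one must check simultaneously that each $S_k$ inherits the broken chain condition and that the $S_k$ cannot all coincide with $R$. The latter is the real obstacle, resolved by the observation that $S_k=R$ pins the additive index $|R/I_k|$ below $n$, so finiteness of the characteristic does the work. It is worth noting that this route never appeals to the commutative Gilmer--Heinzer theorem nor to Theorem \ref{MinimalNonNoetherian}; if instead one wished to mirror the strategy of the Noetherian case, the natural first move would be to recall that a right Artinian ring is right Noetherian, apply Theorem \ref{MinimalNonNoetherian} to reduce to $R$ right Noetherian, and then analyse prime quotients --- but the characteristic dichotomy above appears to make this detour unnecessary.
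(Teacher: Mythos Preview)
Your proof is correct and takes a genuinely different, more elementary route than the paper's. Both arguments handle characteristic zero identically (the prime subring $\mathbb{Z}$ is not Artinian, hence cannot be proper), but they diverge in positive characteristic. The paper first invokes Theorem~\ref{MinimalNonNoetherian} via Lemma~\ref{MinimalNonArtinianIsNoetherian} to obtain that $R$ is right Noetherian, then passes to a quotient in which every proper homomorphic image is right Artinian, then reduces the characteristic first to a prime power (using central idempotents) and then to a prime $p$ (by showing $\mathbb{Z}/p^l\mathbb{Z}+pR$ is proper); only at that point does it run the subring argument $F_p+A_2=R$ and exploit that $F_p$ is a field to force a unit into a proper right ideal. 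Your counting trick --- that $S_k=P+I_k=R$ pins $|R/I_k|\le n$ while the chain forces these finite indices to strictly increase --- bypasses all of this machinery: it needs neither the Noetherian classification nor any reduction of the characteristic, and works uniformly for every $n>0$. The payoff is a short, self-contained argument independent of the rest of the paper; the paper's approach, by contrast, exhibits more structural information (the role of Noetherianity, the special behaviour in prime characteristic) at the cost of being considerably longer.
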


In the commutative case, the authors of \cite{GilmerHeinzer_JonssonModules} proved the stronger result that if $C$ is a proper subring of $R$ such that every proper subring of $R$ containing $C$ is Artinian, then $R$ is Artinian. They did this by using classical Krull dimension. In particular they showed that the dimension of a commutative ring cannot strictly exceed that of all its subrings. This approach does not work in the noncommutative setting as classical Krull dimension is not as well behaved there. However in the context of rings with polynomial identity, we recover the zero-dimensional case of this result, in the form of Proposition~\ref{PIExtensionClKdim0}. This allows us to recover the stronger result for PI rings.

\begin{Theorem}\label{PIMinlRightArtinian}
Let $R$ be a PI ring and $C$ a central proper subring of $R$. Suppose that every proper subring of $R$ containing $C$ is right Artinian. Then $R$ is right Artinian.
\end{Theorem}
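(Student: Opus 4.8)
The plan is to bootstrap from the commutative theorem of \cite{CommutativeMinimalNonNoetherianOrArtinian} by passing to the centre $\Z(R)$ and feeding in the dimension-theoretic input of Proposition~\ref{PIExtensionClKdim0}. First I would dispose of the commutative case: if $R$ happens to be commutative, then $C$ is a proper subring all of whose proper overrings are Artinian, and the stronger commutative result of \cite{CommutativeMinimalNonNoetherianOrArtinian} gives immediately that $R$ is Artinian. So I may assume $R$ is genuinely noncommutative, whence $\Z(R)\subsetneq R$. Since $C$ is central we have $C\subseteq\Z(R)$, so $\Z(R)$ is a proper subring of $R$ containing $C$; by hypothesis it is right Artinian, and being commutative it is Artinian. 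This is the crucial gain: the centre is now a commutative Artinian ring, with in particular only finitely many maximal ideals.

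Next I would apply Proposition~\ref{PIExtensionClKdim0} to the central subring $C$ to conclude that $\operatorname{Cl.K.dim}(R)=0$; this is the exact role of that proposition, and it is what rules out the $\mathbb{Z}$-type behaviour of Theorem~\ref{MinimalNonArtinian} (whose centre has dimension one). With this in place, I claim that \emph{it suffices to prove that $R$ is finitely generated as a module over $\Z(R)$}. Indeed, if $R$ is a finitely generated module over the commutative Artinian ring $\Z(R)$, then $R$ has finite length as a $\Z(R)$-module; every right ideal of $R$ is a $\Z(R)$-submodule because $\Z(R)$ is central, so a descending chain of right ideals is a descending chain of $\Z(R)$-submodules and must stabilise. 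Hence $R$ is right Artinian. So the entire theorem reduces to a single finiteness statement, and this is where I expect all the difficulty to lie.

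The main obstacle, then, is to deduce module-finiteness of $R$ over $\Z(R)$ from the hypothesis, and here one must use the subring condition itself and not merely its consequence that $\Z(R)$ is Artinian. I would argue by contradiction: supposing $R$ is not module-finite over $\Z(R)$, the aim is to exhibit a proper subring $S$ with $C\subseteq S\subsetneq R$ that fails the descending (equivalently ascending) chain condition, contradicting the hypothesis. To control the situation I would use the PI structure theory available once $\operatorname{Cl.K.dim}(R)=0$: Posner's theorem identifies the prime factors of $R$ as simple Artinian rings finite-dimensional over their (field) centres, Rowen's theorem (every nonzero ideal of a semiprime PI ring meets the centre) links ideals of $R$ to the finitely many maximal ideals of the Artinian centre, and the contraction map $\Spec R\to\Spec\Z(R)$ then has good finiteness properties. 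Using these, a failure of module-finiteness should force an infinite, suitably chosen family of elements whose $\Z(R)$-span is not finitely generated; adjoining these to $C$ while keeping the resulting subring proper produces the desired non-Noetherian proper overring of $C$. Producing a subring that is simultaneously proper and non-Artinian is the delicate point --- it is the analogue here of the explicit constructions in the commutative argument of \cite{CommutativeMinimalNonNoetherianOrArtinian} --- and it is the step I would expect to absorb the bulk of the work. Once module-finiteness is secured, the conclusion follows from the reduction above.
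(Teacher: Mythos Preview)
Your plan leaves the decisive step --- module-finiteness of $R$ over $\Z(R)$ --- as an unfinished sketch, and this is a genuine gap. The contradiction you describe (``adjoining these to $C$ while keeping the resulting subring proper produces the desired non-Noetherian proper overring of $C$'') is not carried out, and it is not clear that the PI structure theory you invoke (Posner, Rowen, finiteness of the fibre of $\Spec R\to\Spec\Z(R)$) suffices to produce such a subring in the presence of a possibly complicated nilradical. You yourself flag this as ``the step I would expect to absorb the bulk of the work,'' and indeed all the real content of your argument is hidden there.

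The paper bypasses this entirely. After obtaining $\Cl.K.dim(R)=0$ from Proposition~\ref{PIExtensionClKdim0} (as you do), it observes that $C$ is itself a proper subring of $R$ containing $C$, hence Artinian, and then invokes Proposition~\ref{MinlNotRightNoethArtinianSub} (already proved in Section~2) to conclude that $R$ is right Noetherian. Finally Proposition~\ref{PIdim0ArtinianIff} --- the equivalence, for PI rings of classical Krull dimension zero, of right Artinian with right Noetherian (or indeed with ACC on two-sided ideals) --- finishes the proof in one line. So the paper never needs module-finiteness over the centre: the combination ``$\Cl.K.dim(R)=0$'' $+$ ``right Noetherian'' is already enough in the PI setting, and both ingredients come directly from propositions established earlier in the paper. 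You overlooked Propositions~\ref{MinlNotRightNoethArtinianSub} and~\ref{PIdim0ArtinianIff}, which are precisely the tools that make the argument short.
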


In \cite{GilmerOMalley_ProperSubringsNoetherian}, Gilmer and O'Malley considered (possibly noncommutative) rings without 1 whose subrings are all right Noetherian. They showed that such a ring is either right Noetherian or the zero ring on a Prüfer $p$-group. Of course in this context, the condition on subrings is much stronger than in ours, as for instance every one-sided ideal is a subring in a ring without 1.

All rings considered in this paper are associative with $1$. Subrings of a given ring $R$ must contain the identity element of $R$. We denote inclusion by $\subset$ and strict inclusion by~$\subsetneq$.

For a prime number $p$, the \textit{Prüfer $p$-group} is $\mathbb{Z}[1/p]/\mathbb{Z}$ with usual addition, denoted by $\mathbb{Z}(p^\infty)$. It is an infinite group whose proper subgroups are all finite. More precisely, every proper subgroup of $\mathbb{Z}(p^\infty)$ is cyclic and generated by $1/p^n$ for some $n\geq 0$.

For a commutative ring $C$ and a $C$-module $M$, the \textit{trivial extension} of $C$ by $M$ is the commutative ring whose additive group is the direct sum $C\oplus M$, equipped with the multiplication $(c_1,m_1)(c_2,m_2)=(c_1 c_2,c_1 m_2+c_2 m_1)$. This construction can be found in \cite[Example~2.22(A)]{Lam_Lectures}. It is sometimes called Nagata's idealisation in the literature.

Following \cite[Chapter~6]{McConnell-Robson}, we call \textit{classical Krull dimension} of a ring $R$ the maximal length of a chain of prime ideals in $R$. We denote it by $\Cl.K.dim(R)$. For a right $R$-module $M$, the \textit{Krull dimension} of $M$ is the deviation of the lattice of submodules of $M$. We denote it by $\K(M)$.

\paragraph{Acknowledgements.} The author would like to thank Vladimir Bavula for suggesting to look at the problems considered in this paper and for helpful comments and discussions. Thank you also to the anonymous reviewer whose remarks improved the presentation of the paper. The author is supported by EPSRC grant EP/T517835/1.

\section{Rings whose proper Subrings are right Noetherian}

The question we are considering was solved in the commutative case by Gilmer and Heinzer with the following result.

\begin{theorem}\cite[Corollary~4]{GilmerHeinzer_JonssonModules}\label{CommutativeMinimalNonNoetherian}
Let $R$ be a commutative ring which is not Noetherian. Suppose that every proper subring of $R$ is Noetherian. Then $R$ is the trivial extension of $\mathbb{Z}$ by $\mathbb{Z}(p^\infty)$ for some prime $p$.
\end{theorem}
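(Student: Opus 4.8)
The plan is to reconstruct the additive and multiplicative skeleton of $R$ and to show that the only way ascending chains can fail is if $R$ is $\mathbb{Z}$ together with a square-zero Prüfer ideal. The first, routine, reduction is that $R$ is not finitely generated as a ring: any finitely generated commutative ring is a quotient of some $\mathbb{Z}[x_1,\dots,x_n]$ and is therefore Noetherian by the Hilbert basis theorem. Hence every finitely generated subring of $R$ is a proper subring, so Noetherian, and $R$ is the directed union of its finitely generated Noetherian subrings; the failure of the ascending chain condition is a genuinely global feature that no proper subring witnesses.

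Next I would locate the non-Noetherian behaviour in the torsion of the additive group. Let $T$ be the torsion subgroup of $(R,+)$; since $nt=0$ forces $n(rt)=0$, the set $T$ is an ideal and $R/T$ is torsion-free. I would aim to prove that $\operatorname{char} R=0$, that $R/T\cong\mathbb{Z}$, and hence, as $\mathbb{Z}\cdot 1$ is torsion-free and meets the torsion ideal trivially, that $R=\mathbb{Z}\cdot 1\oplus T$ as abelian groups. The mechanism for $R/T\cong\mathbb{Z}$ is that any proper subring of $R/T$ pulls back to a proper subring of $R$ containing $T$; since $T$ is not finitely generated (it is where the non-Noetherianness resides), this preimage is again non-Noetherian, contradicting the hypothesis, so $R/T$ has no proper subrings and equals its prime ring. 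The delicate point here is to force $\operatorname{char} R=0$, equivalently to rule out that the non-Noetherian part sits over a field: if $R$ contained a field, either because $\operatorname{char} R=p$ makes $R$ an $\mathbb{F}_p$-algebra or because $\mathbb{Q}\subseteq R$, then an infinite-dimensional ideal would contain a proper infinite-dimensional subspace, and adjoining it to the prime ring would produce a proper non-Noetherian subring. This is exactly the feature separating $\mathbb{Z}$ from a field, and I expect it to be one half of the main obstacle.

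With the splitting $R=\mathbb{Z}\cdot 1\oplus T$ in place, I would study $T$ as a module. For any ideal $J$ of $R$ contained in $T$, the additive group $\mathbb{Z}\cdot 1\oplus J$ is again a subring, the closure under multiplication using exactly that $J$ absorbs products; it is proper precisely when $J\subsetneq T$, so every proper submodule of $T$ is Noetherian while $T$ itself is not. Thus $T$ is a minimal non-Noetherian module, and decomposing it into primary components forces $T$ to be $p$-primary for a single prime $p$: two nonzero components, or infinitely many, would each produce a proper non-Noetherian submodule. The technical heart is now to show that $R$ acts on $T$ through its quotient $R/T\cong\mathbb{Z}$, that is, that $T^2=0$; this is the second half of the obstacle, and it is where the ``over $\mathbb{Z}$, not over a field'' phenomenon reappears, since otherwise $T/pT$ could behave like a Prüfer module over an $\mathbb{F}_p$-algebra. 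Granting $T^2=0$, the $R$-submodules of $T$ are just its subgroups, and a $p$-primary minimal non-Noetherian abelian group is an infinite $p$-group all of whose proper subgroups are finite, hence $T\cong\mathbb{Z}(p^\infty)$ by the classical description of such groups.

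Finally I would assemble the trivial extension, which is now immediate and also internally consistent: since $\mathbb{Z}(p^\infty)$ is divisible and torsion, $\mathbb{Z}(p^\infty)\otimes_{\mathbb{Z}}\mathbb{Z}(p^\infty)=0$, so indeed every bilinear multiplication on it vanishes. Combining $T^2=0$ with the additive splitting, the product in $R$ reads $(a+t)(b+s)=ab+(as+bt)$ for $a,b\in\mathbb{Z}$ and $s,t\in T$, which is precisely the multiplication of the trivial extension of $\mathbb{Z}$ by the $\mathbb{Z}$-module $\mathbb{Z}(p^\infty)$. This identifies $R$ with that trivial extension and completes the proof.
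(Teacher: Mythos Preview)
The paper does not give its own proof of this statement: it is quoted from \cite[Corollary~4]{CommutativeMinimalNonNoetherianOrArtinian} and used as a black box in the proof of Theorem~\ref{MinimalNonNoetherian}. There is therefore no proof in the paper to compare your proposal against.

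That said, the paper's surrounding machinery, specialised to the commutative case, would organise the argument around the prime radical $N$ rather than the torsion ideal $T$: Proposition~\ref{MinlNotRightNoethArtinianSub} rules out positive characteristic (the prime subring being Artinian would force $R$ Noetherian), and Proposition~\ref{MinlNotRightNoethReducedSub} shows $N$ is not Noetherian, $R=\mathbb{Z}+N$, and, since $\mathbb{Z}$ is reduced, $N^{2}=0$ via Proposition~\ref{SquareIsZero}. Your torsion-ideal route should reach the same endpoint (indeed $T=N$ in the target ring), but two steps in your sketch are not yet arguments. First, you assert early that $T$ ``is where the non-Noetherianness resides'' and use this to force $R/T\cong\mathbb{Z}$, but at that point nothing prevents $R/T$ from being non-Noetherian instead; the paper's route avoids this by proving directly that $R/N$ is Noetherian (finitely many minimal primes plus Proposition~\ref{PrimeCaseMinlNotRightNoeth}). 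Second, your exclusion of a field in $R$ says that a proper infinite-dimensional \emph{subspace} of a non-finitely-generated ideal, adjoined to the prime field, yields a proper non-Noetherian subring; but an arbitrary subspace need not be multiplicatively closed nor an ideal of the resulting set, so this does not produce a subring without further work. The nilradical approach sidesteps both issues, which is what it buys over the torsion decomposition.
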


We generalise this result to noncommutative rings. Our proof relies on Theorem~\ref{CommutativeMinimalNonNoetherian} and essentially follows the strategy of \cite{GilmerHeinzer_JonssonModules}, though some changes are needed to adapt it to the noncommutative world. We first need a result which is essentially a reformulation of \cite[Theorem~3.1]{GilmerOMalley_ProperSubringsNoetherian}.

\begin{proposition}\cite[Theorem~3.1]{GilmerOMalley_ProperSubringsNoetherian}\label{SquareIsZero}
Let $R$ be a ring and $I\lhd_r R$ a right ideal of $R$ that is not finitely generated. If every proper submodule of $I_R$ is finitely generated, then $I^2=0$.
\end{proposition}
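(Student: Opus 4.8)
The plan is to reduce the claim to showing that $aI=0$ for every $a\in I$; since $I^{2}=\sum_{a\in I}aI$, this immediately yields $I^{2}=0$. The central device is left multiplication. For a fixed $a\in I$, I would consider the map $\lambda_a\colon I\to I$, $\lambda_a(x)=ax$. Because $a\in I$ and $I$ is a right ideal, $ax\in IR\subseteq I$, so $\lambda_a$ does land in $I$; moreover $\lambda_a(xr)=a(xr)=(ax)r=\lambda_a(x)r$, so $\lambda_a$ is an endomorphism of the right $R$-module $I_R$. Its image is exactly $aI=\{ay:y\in I\}$ (a submodule, as $\lambda_a$ is additive and $R$-linear), and writing $K:=\ker\lambda_a$ we have $aI\cong I/K$.

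I would then establish the dichotomy: for each $a\in I$, either $aI=0$ or $aI=I$. Suppose $0\neq aI\subsetneq I$. Then $aI$ is a proper submodule of $I$, hence finitely generated by hypothesis, so $I/K\cong aI$ is finitely generated; and $K\neq I$ (since $aI\neq0$), so $K$ is also a proper, hence finitely generated, submodule of $I$. As an extension of a finitely generated module by a finitely generated module is finitely generated, $I$ would be finitely generated, contradicting the hypothesis. Thus no $a$ can satisfy $0\neq aI\subsetneq I$.

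The crux is ruling out $aI=I$; here the lack of any cardinality bound on $I$ makes a naive chain argument on the kernels of the iterates $\lambda_a^{n}$ unattractive, so I would avoid it. Instead I exploit that $a\in I=aI$, which gives $a=ae$ for some $e\in I$. For any $z\in I$ one computes $a(ez-z)=(ae)z-az=az-az=0$, whence $ez-z\in K$, i.e. $ez\equiv z\pmod K$. Since $ez\in eR$ and $z\equiv ez\pmod K$ for every $z\in I$, this forces $I=eR+K$ (the reverse inclusion being clear from $e\in I$). But $eR$ is cyclic and $K$ is a proper (because $aI=I\neq0$ makes $K\neq I$) hence finitely generated submodule, so $I=eR+K$ is finitely generated — again a contradiction. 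Therefore $aI=I$ is impossible.

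Combining the two cases leaves only $aI=0$ for every $a\in I$, and hence $I^{2}=\sum_{a\in I}aI=0$. The main obstacle is the case $aI=I$: the key idea is to manufacture the near-identity element $e$ with $a=ae$ and observe that it acts as a left identity on $I$ modulo $K$, which collapses $I$ into the sum of a cyclic module and a finitely generated one. Beyond that, the only ingredients used are that proper submodules of $I_R$ are finitely generated, that $I$ itself is not, and the closure of finite generation under extensions, matching the reformulation of \cite[Theorem~3.1]{NONNOETHERIANRINGSFORWHICHEACHPROPERSUBRINGISNOETHERIAN}.
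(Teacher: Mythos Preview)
Your proof is correct and follows essentially the same route as the paper: both fix $a\in I$, use left multiplication $\lambda_a$ with kernel $K$, exhibit $I$ as $K$ plus a finitely generated piece, and conclude that $K$ cannot be proper, so $aI=0$. The only difference is in ruling out $aI=I$: the paper uses the sandwich $aI\subseteq aR\subseteq I$ to get $I=aR$ cyclic in one line, whereas you extract $e$ with $a=ae$ and show $I=eR+K$ --- both work, but the former is shorter.
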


\begin{proof}
Take any element $x$ in $I$. If $xI=I$ then $I=xR$ is finitely generated, contradicting our hypothesis. So the right ideal $xI$ is properly contained in $I$. Thus $xI$ is finitely generated and there are $i_1,\ldots,i_n$ in $I$ such that $xI=xi_1R+\ldots+xi_nR$. Taking any $r$ in $I$, we have $xr=xi_1r_1+\ldots+xi_nr_n$ for some $r_1,\ldots,r_n$ in $R$. Thus $x(r-i_1r_1-\ldots-i_nr_n)=0$ i.e. $r-i_1r_1-\ldots-i_nr_n$ belongs to the right annihilator of $x$ in $I$, which we call $A$. In particular, $r$ belongs to $A+i_1R+\ldots+i_nR$, and since $r$ was arbitrary in $I$, we have $I=A+i_1R+\ldots+i_nR$. Since $I_R$ is not finitely generated, $A$ cannot be finitely generated either and it follows that $A=I$, that is $xI=0$. As $x$ was arbitrary in $I$, this implies that $I^2=0$.
\end{proof}

We start by studying pairs $C\subsetneq R$, where $C$ is a central proper subring of a ring $R$, and every proper subring of $R$ containing $C$ is right Noetherian. Theorem~\ref{MinimalNonNoetherian} will then follow by taking $C$ to be the minimal subring of $R$. In the commutative case, Lemmas~\ref{SquareIn} and \ref{ACCsemiprimeIdeals} and Propositions~\ref{PrimeCaseMinlNotRightNoeth}~and~\ref{MinlNotRightNoethReducedSub} are statements of \cite[Theorem~3]{GilmerHeinzer_JonssonModules}.

\begin{lemma}\label{SquareIn}
Let $R$ be a ring and $C$ a central proper subring of $R$. Suppose that every proper subring of $R$ containing $C$ is right Noetherian. If a right ideal $I$ of $R$ is not finitely generated, then $I^2$ is contained in $(I\cap C)R$. If moreover $I^2=0$ then $R$ is commutative.
\end{lemma}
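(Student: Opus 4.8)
The plan is to prove the two assertions separately, reducing the first to Proposition~\ref{SquareIsZero} by passing to a quotient. Write $J=(I\cap C)R$. Since $C$ is a proper subring containing itself, the hypothesis makes $C$ right Noetherian, and $I\cap C$ is an ideal of $C$, hence finitely generated; as its generators are central, $J=(I\cap C)R$ is a finitely generated two-sided ideal contained in $I$. Passing to $\bar R=R/J$, the image $\bar I$ of $I$ is a right ideal that is still not finitely generated (a module finitely generated modulo a finitely generated submodule would itself be finitely generated), and one checks $\bar I\cap\bar C=0$, where $\bar C$ is the image of the central subring $C$. The hypothesis descends to $\bar R$: a proper subring of $\bar R$ containing $\bar C$ is the image of a proper subring of $R$ containing $C$ and $J$, hence a quotient of a right Noetherian ring, so right Noetherian.

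The engine of the argument is the observation that \emph{adjoining $C$ to a proper submodule produces a proper subring}. For a right $\bar R$-submodule $\bar N\subsetneq\bar I$, the additive set $\bar C+\bar N$ is a subring (closure uses that $\bar C$ is central and $\bar N$ is a right ideal), and because $\bar I\cap\bar C=0$ one computes $\bar I\cap(\bar C+\bar N)=\bar N$; so $\bar C+\bar N=\bar R$ would force $\bar N=\bar I$, a contradiction. Thus $\bar C+\bar N$ is a proper subring containing $\bar C$, hence right Noetherian, and $\bar N$ is a right ideal of it. The finitely many generators of $\bar N$ over $\bar C+\bar N$ may be chosen inside $\bar N$, and since $\bar C+\bar N\subseteq\bar R$ they generate $\bar N$ over $\bar R$ as well; therefore every proper submodule of $\bar I$ is finitely generated. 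Proposition~\ref{SquareIsZero} then yields $\bar I^2=0$, i.e.\ $I^2\subseteq J=(I\cap C)R$.

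For the second assertion, suppose $I^2=0$. First I would show $R=C+I$. The set $C+I$ is a subring, and $I$ is a right ideal of it (here $I(C+I)=IC\subseteq I$, using $I^2=0$); if $C+I$ were proper it would be right Noetherian, forcing $I$ to be finitely generated over $C+I$ and hence, by the same generators-inside argument, over $R$, contradicting the hypothesis. So $C+I=R$. Finally, a direct computation on products $(c_1+i_1)(c_2+i_2)$, using that $C$ is central and that $i_1i_2\in I^2=0$, shows the multiplication is symmetric, so $R$ is commutative.

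The main obstacle is the key fact that $\bar C+\bar N$ is a \emph{proper} subring; everything hinges on the intersection identity $\bar I\cap(\bar C+\bar N)=\bar N$, which is exactly what the reduction modulo $J$ (arranging $\bar I\cap\bar C=0$) is designed to make clean. A secondary point requiring care, used repeatedly, is that finite generation of a right ideal over a subring $T$ with $T\subseteq R$ upgrades to finite generation over $R$ provided the generators are taken inside the ideal; this is what lets the Noetherianity of the auxiliary subrings feed back into statements about $R$.
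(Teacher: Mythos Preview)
Your argument is correct and follows essentially the same route as the paper: pass to $\bar R=R/(I\cap C)R$, show every proper $\bar R$-submodule of $\bar I$ is finitely generated by exhibiting $\bar C+\bar N$ as a proper (hence right Noetherian) subring, apply Proposition~\ref{SquareIsZero}, and then use $R=C+I$ together with centrality of $C$ and $I^2=0$ for commutativity. The only cosmetic difference is that the paper first records $C+I=R$ and hence the direct-sum decomposition $\bar R=\bar C\oplus\bar I$ to see properness of $\bar C+\bar N$, whereas you extract properness directly from the intersection identity $\bar I\cap(\bar C+\bar N)=\bar N$; these are equivalent manoeuvres.
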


\begin{proof}
Since $C$ is Noetherian, its ideal $I\cap C$ is finitely generated, and thus so is the ideal $I':=(I\cap C)R$ of $R$. As $I_R$ is not finitely generated then neither is $I/I'$ as a right $R$-module, nor as a right ideal of $R/I'$. Therefore the ring $R/I'$ is not right Noetherian, but each of its proper subrings containing $(C+I')/I'$ is right Noetherian.

Now, since $I_R$ is not finitely generated, neither is $I$ as a right ideal of $C+I$. Thus the ring $C+I$ is not right Noetherian and it follows that $C+I=R$. Therefore as Abelian groups we have
\[R/I'=(C+I')/I'\oplus I/I'.\]
Moreover, for any right ideal $A$ of $R/I'$ properly contained in $I/I'$, we have that $(C+I')/I'\oplus A$ is a proper subring of $R/I'$. Thus $(C+I')/I'\oplus A$ is right Noetherian and it follows that $A$ is finitely generated as a right $R/I'$-module. Thus the right ideal $I/I'$ satisfies the hypothesis of Proposition~\ref{SquareIsZero} and we have $\left(I/I'\right)^2=0$, i.e. $I^2\subset I'$ as required.

Now since $R=C+I$, its commutator satisfies $[R,R]=[C+I,C+I]=[I,I]\subset I^2$ using that $C$ is central. Thus if $I^2=0$, then $R$ is commutative as required.
\end{proof}

\begin{lemma}\label{ACCsemiprimeIdeals}
Let $R$ be a ring and $C$ a central proper subring of $R$. Suppose that every proper subring of $R$ containing $C$ is right Noetherian. Then $R$ has ACC on semiprime ideals. In particular, for any ideal $A\lhd R$, there are only finitely many prime ideals minimal with respect to containing $A$.
\end{lemma}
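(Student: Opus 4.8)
The plan is to show directly that $R$ satisfies the ascending chain condition on semiprime ideals, and then to obtain the statement about minimal primes as a formal consequence. So I would start with an arbitrary ascending chain $S_1 \subseteq S_2 \subseteq \cdots$ of semiprime ideals of $R$ and set $S := \bigcup_i S_i$. Being an increasing union of two-sided ideals, $S$ is itself a two-sided ideal of $R$, and I would split the argument according to whether or not $S$ is finitely generated as a right ideal.

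If $S$ is finitely generated, say $S = a_1 R + \cdots + a_k R$, then each $a_j$ already lies in some $S_{n_j}$, so all of them lie in $S_N$ for $N = \max_j n_j$; this gives $S \subseteq S_N \subseteq S$, whence $S = S_N$ and the chain stabilises. The substance is in the case where $S$ is \emph{not} finitely generated as a right ideal. Here I would apply Lemma~\ref{SquareIn} to the non-finitely-generated right ideal $S$, obtaining $S^2 \subseteq (S \cap C)R$. The role of $C$ is decisive: the hypothesis applies to $C$ itself, a proper subring of $R$ containing $C$, so $C$ is right Noetherian, and being central it is commutative, hence Noetherian. Therefore the ascending chain $S_1 \cap C \subseteq S_2 \cap C \subseteq \cdots$ of ideals of $C$, whose union is $S \cap C$, must stabilise, so there is an $m$ with $S_i \cap C = S \cap C$ for all $i \geq m$.

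For $i \geq m$ we then have $(S \cap C)R = (S_i \cap C)R \subseteq S_i$, the last inclusion because $S_i$ is a right ideal containing $S_i \cap C$. Combined with the previous step this yields $S^2 \subseteq S_i$; since $S$ is a two-sided ideal and $S_i$ is semiprime, we conclude $S \subseteq S_i$, and as $S_i \subseteq S$ we get $S = S_i$ for all $i \geq m$. In either case the chain stabilises, so $R$ has ACC on semiprime ideals. For the final clause I would invoke the standard consequence of this chain condition: among semiprime ideals with infinitely many minimal primes, an ACC-maximal one $S$ cannot be prime, so there are ideals $A, B \supsetneq S$ with $AB \subseteq S$; every prime over $S$ then contains $A$ or $B$, placing the minimal primes over $S$ among those over the strictly larger semiprime ideals $\sqrt{S+A}$ and $\sqrt{S+B}$, which have finitely many minimal primes by maximality of $S$. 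Applying this to $\sqrt{A}$, whose minimal primes coincide with those of $A$, gives finitely many minimal primes over any ideal $A$.

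The main obstacle I anticipate, and the reason the proof is not a one-line appeal to Noetherianity, is that the union $S$ of a chain of semiprime ideals is not visibly semiprime, so one cannot simply run the argument on $S$ itself. The trick that resolves this is to apply Lemma~\ref{SquareIn} — which asks only that $S$ be a non-finitely-generated right ideal — and then to play the resulting containment $S^2 \subseteq (S \cap C)R$ against the semiprimeness of the individual terms $S_i$, using the Noetherianity of the central subring $C$ to link the two.
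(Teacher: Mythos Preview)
Your proof is correct and follows essentially the same approach as the paper: both argue via Lemma~\ref{SquareIn} that if the union $S$ is not finitely generated then $S^2 \subseteq (S\cap C)R$, use the Noetherianity of $C$ to get $(S\cap C)R \subseteq S_i$ for large $i$, and conclude $S = S_i$ by semiprimeness. The only cosmetic difference is that the paper reaches the containment $(S\cap C)R \subseteq S_k$ by observing $(S\cap C)R$ is finitely generated (hence lies in some term of the chain), whereas you stabilise the chain $S_i \cap C$ inside $C$; the paper also simply cites a reference for the final clause.
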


\begin{proof}
Let $I_1\subsetneq I_2\subsetneq\ldots$ be a chain of semiprime ideals and let $I$ denote the union $\bigcup_{j\geq 1} I_j$. Since $C$ is Noetherian, $(I\cap C)R$ is finitely generated as a right ideal of $R$ and thus there exists a $k\geq 1$ such that $I_k$ contains $(I\cap C)R$. If the chain does not terminate, then $I_R$ is not finitely generated and by Lemma~\ref{SquareIn}, $I^2$ is contained in $(I\cap C)R$. Thus $I^2$ is contained in $I_k$, which is only possible when $I_k=I$, as $I_k\subset I$ and $I_k$ is semiprime. For the last statement, see for instance \cite[Proposition~4.4.13]{RowenBook}.
\end{proof}

\begin{proposition}\label{MinlNotRightNoethArtinianSub}
Let $R$ be a ring and $C$ a central proper subring of $R$. Suppose that every proper subring of $R$ containing $C$ is right Noetherian. If $C$ is Artinian then $R$ is right Noetherian.
\end{proposition}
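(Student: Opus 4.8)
The plan is to prove the statement by a sequence of reductions, ending with the case where $C$ is a field, which I handle directly; throughout I use that a commutative Artinian ring $C$ has nilpotent Jacobson radical $J=J(C)$ and that $C/J$ is a finite product of fields. \emph{First I would reduce to the case where $C$ is reduced.} Since $C$ is central, $J$ is central and $JR$ is a two-sided ideal with $(JR)^m=J^mR=0$ for some $m$, so $JR$ is nilpotent. Passing to $\bar R=R/JR$ with central subring $\bar C=(C+JR)/JR$, a Nakayama-style argument shows $\bar C$ is proper: if $C+JR=R$ then, using $J^n\subseteq C$, one gets $R=C+J^nR$ for every $n$ and hence $R=C+J^mR=C$, contradicting properness of $C$. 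The hypotheses transfer, as a proper subring of $\bar R$ containing $\bar C$ is the image of a proper subring of $R$ containing $C$, hence right Noetherian; and $\bar C$, being a quotient of $C/J$, is reduced Artinian. Finally, proving $\bar R$ right Noetherian will suffice: the filtration $R=J^0R\supseteq J^1R\supseteq\cdots\supseteq J^mR=0$ has each factor $J^iR/J^{i+1}R$ finitely generated over $\bar R$ (because $J^i$ is a finitely generated ideal of the Noetherian ring $C$, so $J^iR$ is a finitely generated right $R$-module annihilated on the right by $J$), hence Noetherian, and an extension of Noetherian modules is Noetherian.

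\emph{Next I would reduce to the case where $C$ is a field.} Writing $C=F_1\times\cdots\times F_t$ as a product of fields, I induct on $t$. For $t\geq 2$, let $e\in C$ be the central idempotent cutting out $F_1$; it is a central idempotent of $R$, so $R=eR\times(1-e)R$ and $R$ is right Noetherian iff both factors are. The factor $eR$ has central subring $eC=F_1$ and $(1-e)R$ has central subring $(1-e)C=F_2\times\cdots\times F_t$, and the hypotheses transfer to each factor (a proper subring of $eR$ containing $F_1$ is completed by $(1-e)R$ to a proper subring of $R$ containing $C$, hence right Noetherian). If a central subring equals its whole factor, that factor is Artinian; otherwise I apply the field case to $eR$ and the induction hypothesis to $(1-e)R$.

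\emph{For the base case I assume $C$ is a field and that $R$ is not right Noetherian, aiming for a contradiction.} Fix a non-finitely-generated right ideal $I$. By Lemma~\ref{SquareIn}, $I^2\subseteq(I\cap C)R$; as $I\cap C$ is an ideal of the field $C$ it is $0$ or $C$, and $I\cap C=C$ forces $1\in I$ and $I=R$, impossible. So $I\cap C=0$, giving $I^2=0$, whence $R$ is commutative by Lemma~\ref{SquareIn}. Moreover, since $I$ is not finitely generated, $C+I$ cannot be a proper right-Noetherian subring, forcing $R=C+I=C\oplus I$; thus $R=C\ltimes I$ is the trivial extension of the field $C$ by the $C$-vector space $I$, with $\dim_C I=\infty$. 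Choosing a proper subspace $V\subsetneq I$ of infinite dimension (e.g. a hyperplane), the subset $C\oplus V$ is a proper subring containing $C$ whose right ideal $V$ is an infinite-dimensional $C$-vector space with $R$ acting through $C$, hence not finitely generated. This contradicts the hypothesis that $C\oplus V$ is right Noetherian, so $R$ is right Noetherian after all.

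\emph{The main obstacle.} The routine parts are the checks that each reduction preserves the chain-condition hypotheses while respecting the convention that subrings share the identity of $R$. The genuinely load-bearing points are the nilpotency of $JR$ together with the lifting of right Noetherianity through the finite $J$-adic filtration (this is exactly where the \emph{Artinian} hypothesis on $C$, rather than merely reducedness or zero-dimensionality, is used), and the dichotomy $I\cap C\in\{0,C\}$ in the field case, which is what forces $I^2=0$ and hence the commutative trivial-extension structure on which the final contradiction rests.
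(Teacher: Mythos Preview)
Your proof is correct, but it follows a genuinely different route from the paper's. The paper argues as follows: assuming $R$ is not right Noetherian, it suffices (by the commutative result \cite[Corollary~3]{CommutativeMinimalNonNoetherianOrArtinian}) to show $R$ is commutative. Taking any non-finitely-generated right ideal $I$, one forms a strictly descending chain $I=I_0\supsetneq I_1\supsetneq\cdots$ of non-finitely-generated right ideals; since each $C+I_{j+1}=R$, the intersections $I_j\cap C$ strictly descend in the Artinian ring $C$, so the chain stops at some $I_k$ whose proper $R$-submodules are all finitely generated. Proposition~\ref{SquareIsZero} then gives $I_k^2=0$, and Lemma~\ref{SquareIn} yields commutativity.

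Your argument instead performs two structural reductions (mod out the nilpotent ideal $J(C)R$ and then split along the central idempotents of the resulting semisimple $C$) to reach the case where $C$ is a field, and there you finish directly by exhibiting the non-Noetherian proper subring $C\oplus V$ inside the trivial extension $C\ltimes I$. The paper's proof is shorter and leans on the commutative theorem of Gilmer--Heinzer as a black box; the descending-chain trick is where Artinianity of $C$ enters. Your proof is longer but self-contained: it never invokes the external commutative result, and it makes transparent exactly how Artinianity of $C$ is consumed (nilpotency of $J(C)$ for the filtration lift, finiteness of $\Spec C$ for the idempotent splitting). Both routes ultimately pass through Lemma~\ref{SquareIn} to force $I^2=0$ at the decisive moment.
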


\begin{proof}
If $R$ is commutative, this result is a consequence of \cite[Corollary~3]{GilmerHeinzer_JonssonModules}. Thus we assume that $R$ is neither commutative nor right Noetherian and we seek a contradiction. Let $I\lhd_r R$ be a right ideal of $R$ that is not finitely generated. If every proper submodule of $I$ is finitely generated then $I^2=0$ by Proposition~\ref{SquareIsZero} and hence $R$ is commutative by Lemma~\ref{SquareIn}. Therefore, since we assumed that $R$ is not commutative, it has a right ideal $I_1$ properly contained in $I$ and such that $I_1$ is not finitely generated. Similarly, there is a right ideal $I_2\subsetneq I_1$ such that $I_2$ is not finitely generated. So we can construct inductively an infinite strictly descending chain of right ideals $I=I_0\supsetneq I_1\supsetneq \ldots$ such that no $I_j$ is finitely generated. We will show it induces an infinite strictly descending chain in $C$, which is absurd since $C$ is Artinian.

Take any $j\geq 0$. Since the right ideal $I_{j+1}$ is not finitely generated, the ring $C+I_{j+1}$ is not right Noetherian and hence $C+I_{j+1}=R$. Pick an element $r$ in $I_j\setminus I_{j+1}$. Then there exist $c\in C$ and $s\in I_{j+1}$ such that $c+s=r$. Clearly, $r-s=c$ belongs to $C\cap I_j$ but does not belong to $I_{j+1}$. It follows that the proper inclusion $I_{j+1}\subsetneq I_j$ induces a proper inclusion $I_{j+1}\cap C \subsetneq I_j\cap C$. Therefore the chain $I_0\supsetneq I_1\supsetneq\ldots$ yields an infinite strictly descending chain
\[I_0\cap C\supsetneq I_1\cap C\supsetneq \ldots\] of ideals of $C$. As $C$ is Artinian, this gives the desired contradiction.
\end{proof}

\begin{proposition}\label{PrimeCaseMinlNotRightNoeth}
Let $R$ be a ring and $C$ a central proper subring of $R$. Suppose that every proper subring of $R$ containing $C$ is right Noetherian. If $R$ is prime then it is right Noetherian.
\end{proposition}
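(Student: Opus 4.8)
The plan is to argue by contradiction. Assuming $R$ is not right Noetherian, I would show that $R$ must in fact be commutative, and then invoke the commutative case of the statement. Once $R$ is known to be commutative it is a commutative prime ring, i.e. a domain, satisfying the hypotheses, and the commutative version of this proposition --- which is part of \cite[Theorem~3]{CommutativeMinimalNonNoetherianOrArtinian} --- forces $R$ to be Noetherian, contradicting our assumption. So the whole difficulty is concentrated in proving commutativity.

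First I would record a finite-generation principle coming from the hypothesis: if $J$ is a right ideal with $C+J\neq R$, then $C+J$ is a proper subring containing $C$, hence right Noetherian, and since $J$ is a right ideal of $C+J$ it is finitely generated there; choosing the generators inside $J$ shows that $J$ is finitely generated as a right ideal of $R$. Contrapositively, every non-finitely-generated right ideal $J$ satisfies $C+J=R$. Now take a non-finitely-generated right ideal $I$. By Lemma~\ref{SquareIn} we have $I^2\subseteq(I\cap C)R$; since a prime ring has no nonzero nilpotent right ideals we get $I^2\neq 0$, and therefore $I\cap C\neq 0$. Choose $0\neq c\in I\cap C$. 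As $c$ is a nonzero central element of a prime ring it is a non-zero-divisor, and it is a non-unit because $cR\subseteq I\subsetneq R$.

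The key step is then to exploit $c$. Multiplication by $c$ gives an isomorphism of right $R$-modules $I\cong cI$, so $cI$ is again non-finitely-generated; by the principle above $C+cI=R$, and since $cI\subseteq cR$ this yields $R=C+cR$. Consequently the restriction of the quotient map $C\to R/cR$ is surjective, so $R/cR$ is a homomorphic image of the commutative ring $C$ and is therefore commutative; that is, $[R,R]\subseteq cR$. Iterating $R=C+cR$ gives $R=C+c^nR$ for every $n$, hence $R/c^nR$ is commutative and $[R,R]\subseteq c^nR$ for every $n$, so that $[R,R]\subseteq N:=\bigcap_{n\geq 1}c^nR$.

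The main obstacle, and the point where noncommutativity genuinely bites, is to show $N=0$, which would give $[R,R]=0$ and complete the reduction to the commutative case. The ideal $N$ is two-sided and satisfies $cN=N$. Applying the finite-generation principle once more to the proper subring $C+N\subseteq C+cR=R$ shows that $N$ is finitely generated as a right ideal, so one is reduced to a Nakayama/Krull-intersection statement: a finitely generated module that is unchanged by multiplication by the central regular non-unit $c$ should vanish. Concretely I would take $0\neq u\in N$, form its successive divisions $u=c^nu_n$ with $u_n\in N$ (unique since $c$ is regular), and work inside the subring generated by $C$ together with the $u_n$: either this subring is all of $R$, in which case $R=C[u_m]$ is commutative for some $m$ and we are done, or it is a proper, hence right Noetherian, subring in which the ascending chain $u_1B\subseteq u_2B\subseteq\cdots$ stabilises, producing a relation $u(1-cb)=0$. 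The delicate part I expect to be hardest is upgrading such an annihilation relation to $u=0$; here the levers available are the primeness of $R$ and the commutativity of the quotients $R/c^nR$, which control the factor $1-cb\equiv 1 \pmod{cR}$. Once $N=0$ is established, $R$ embeds into the commutative inverse limit $\varprojlim R/c^nR$, hence is commutative, and the commutative case closes the argument.
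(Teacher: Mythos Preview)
Your proposal has two genuine gaps, and it takes a much longer route than necessary.

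First, the claim that $C+N$ is a \emph{proper} subring is unjustified. You write ``the proper subring $C+N\subseteq C+cR=R$'', but this is just the trivial inclusion $C+N\subseteq R$; nothing you have proved rules out $C+N=R$, and the fact that $N\subsetneq cR$ does not help since you have already shown $C+cR=R$. Without properness your finite-generation principle does not apply and you have no handle on $N$. Second, even granting that $N$ is finitely generated, the step you yourself flag as ``delicate'' is genuinely obstructed: from $u_{n+1}(1-cb)=0$ in a prime ring that is not assumed to be a domain you cannot conclude $u_{n+1}=0$ without knowing $1-cb$ is left regular, and $c$ merely lies in a proper right ideal, not in the Jacobson radical. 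The commutativity of the quotients $R/c^nR$ only tells you that commutators lie in $N$, which is exactly the ideal you are trying to show vanishes, so the argument threatens to become circular.

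The paper's proof is far shorter and never leaves $I$. You have already reached the decisive point: $R=C+cI$ with $0\neq c\in I\cap C$. Now take any $y\in I$ and write $y=c'+cz$ with $c'\in C$ and $z\in I$; then $c'=y-cz\in I\cap C$ and $cz\in I^2$, so $I\subseteq (I\cap C)+I^2\subseteq (I\cap C)R$ by Lemma~\ref{SquareIn}. Since $C$ is Noetherian, $(I\cap C)R$ is finitely generated, contradicting the choice of $I$. No reduction to commutativity, no Krull intersection, and no appeal to the commutative theorem are needed. Incidentally, your observation that $c$ is regular, so that $cI\cong I$ is automatically non-finitely-generated, actually streamlines the paper's own argument, which treats the case ``$xI$ finitely generated'' separately.
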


\begin{proof}
Suppose that there exists a right ideal $I\lhd_r R$ that is not finitely generated. We seek a contradiction. By Lemma~\ref{SquareIn} we have $I^2\subset (I\cap C)R$. Since $R$ is prime, $I^2\neq 0$ and therefore $I$ intersects $C$ nontrivially. So we can take a nonzero central element $x$ in $I$.

If the right ideal $xI$ is not finitely generated then $R=C+xI$. So for any $y$ in $I$, we have $y=c+xz$ for some $c$ in $C$, $z$ in $I$. In particular $c=y-xz$ belongs to $I$ and since $y$ was arbitrary in $I$ it follows that
\[I\subset I\cap C + I^2\subset (I\cap C)R\subset I, \text{ so } I=(I\cap C)R.\]
As $C$ is Noetherian, $(I\cap C)R=I$ is finitely generated, contradicting our choice of $I$.

So $xI$ is finitely generated, and we write $xI=xi_1R+\ldots+xi_nR$ for some $n\geq 1$ and $i_1,\ldots,i_n$ in $I$. Then for any $r$ in $I$ we have elements $r_1,\ldots,r_n$ of $R$ such that $xr=xi_1r_1+\ldots+xi_nr_n$. It follows that
\[0=Rx(r-i_1r_1-\ldots-i_nr_n)=xR(r-i_1r_1-\ldots-i_nr_n)\]
using that $x$ is central in $R$. Since $R$ is prime and $x$ nonzero, this implies that $r=i_1r_1+\ldots+i_nr_n$. As $r$ was arbitrary in $I$, we have $I=i_1R+\ldots+i_nR$, contradicting our choice of $I$.
\end{proof}

\begin{proposition}\label{MinlNotRightNoethReducedSub}
Let $R$ be a ring and $C$ a central proper subring of $R$. Suppose that every proper subring of $R$ containing $C$ is right Noetherian. If $R$ is not right Noetherian then the prime radical $N$ of $R$ is not Noetherian as a right $R$-module and $R=C+N$. If moreover $C$ is reduced then $R$ is the trivial extension of $C$ by $N$ and in particular $R$ is commutative.
\end{proposition}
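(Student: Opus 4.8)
The plan is to deduce the three assertions in turn: first that $R/N$ is right Noetherian, which forces $N$ to fail the Noetherian condition; then the decomposition $R=C+N$; and finally, under the reducedness hypothesis, to show $R$ is commutative and invoke the commutative result.

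First I would show that $R/N$ is right Noetherian. By Lemma~\ref{ACCsemiprimeIdeals} the ring $R$ has only finitely many minimal primes $P_1,\ldots,P_k$, and since $N$ is the prime radical we have $N=P_1\cap\cdots\cap P_k$. I claim each $R/P_i$ is right Noetherian. If $C+P_i\neq R$, then $(C+P_i)/P_i$ is a proper central subring of the prime ring $R/P_i$; any proper subring of $R/P_i$ containing it is the image of a proper subring of $R$ containing $C$ (take preimages under $R\to R/P_i$), hence is right Noetherian, so Proposition~\ref{PrimeCaseMinlNotRightNoeth} applies and $R/P_i$ is right Noetherian. If instead $C+P_i=R$, then $R/P_i\cong C/(C\cap P_i)$ is a quotient of the commutative Noetherian ring $C$ (note that $C$ is itself a proper subring containing $C$, hence right Noetherian, and is commutative as it is central), so $R/P_i$ is again Noetherian. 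The diagonal embedding $R/N\hookrightarrow\prod_{i=1}^{k}R/P_i$ of right $R$-modules then realises $R/N$ as a submodule of a finite direct sum of Noetherian modules, whence $R/N$ is right Noetherian.

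Next I would extract the first conclusion. Because $R$ is not right Noetherian while $R/N$ is, the short exact sequence $0\to N\to R\to R/N\to 0$ of right $R$-modules shows that $N$ cannot be a Noetherian $R$-module. Therefore $N$ contains a right ideal $J$ that is not finitely generated. Exactly as in the proof of Lemma~\ref{SquareIn}, $J$ is then not finitely generated as a right ideal of the subring $C+J$, so $C+J$ is not right Noetherian; by hypothesis it cannot be proper, which forces $C+J=R$ and hence $R=C+N$.

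Finally, assume $C$ is reduced. Since the prime radical $N$ is nil, $J\cap C\subseteq N\cap C=0$, so Lemma~\ref{SquareIn} gives $J^2\subseteq (J\cap C)R=0$, and its second part yields that $R$ is commutative. At this point $R$ is a commutative non-Noetherian ring, $C$ is a reduced proper subring, and every proper subring of $R$ containing $C$ is Noetherian, so the commutative result \cite[Theorem~3]{CommutativeMinimalNonNoetherianOrArtinian} applies and gives that $R$ is the trivial extension of $C$ by $N$. The main obstacle is the first step: establishing that $R/N$ is right Noetherian, and in particular handling the case $C+P_i=R$ where the image of $C$ is no longer a proper subring of $R/P_i$, so that Proposition~\ref{PrimeCaseMinlNotRightNoeth} does not apply directly and one must argue separately that $R/P_i$ is a quotient of $C$. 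The genuinely noncommutative input, by contrast, is light: it is only the passage from $J^2=0$ to the commutativity of $R$ via Lemma~\ref{SquareIn} that lets the whole trivial-extension conclusion be imported from the commutative theory.
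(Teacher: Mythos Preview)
Your proof is correct. The first part follows the paper's line closely: both use Lemma~\ref{ACCsemiprimeIdeals} to obtain finitely many minimal primes and the embedding $R/N\hookrightarrow\prod_i R/P_i$, and then invoke Proposition~\ref{PrimeCaseMinlNotRightNoeth}. The paper, however, sidesteps your case split on whether $C+P_i=R$ by arguing contrapositively: assuming $N_R$ is Noetherian forces $R/N$ to be non-right-Noetherian, so some particular $R/P_j$ is non-right-Noetherian, and then $(C+P_j)/P_j$ is automatically proper because it is Noetherian (as a quotient of $C$) while $R/P_j$ is not. This absorbs your two cases into a single line, so the ``main obstacle'' you flag is not really there.

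The genuine divergence is in the reduced case. You pick a non-finitely-generated $J\subseteq N$, use $J\cap C\subseteq N\cap C=0$ together with Lemma~\ref{SquareIn} to get $J^2=0$ and hence $R$ commutative, and then import the conclusion wholesale from \cite[Theorem~3]{CommutativeMinimalNonNoetherianOrArtinian}. The paper instead stays self-contained: from $C\cap N=0$ it has $R=C\oplus N$ as abelian groups, observes that any proper $R$-submodule $M\subsetneq N$ yields a proper subring $C\oplus M$ and hence $M_R$ is finitely generated, and applies Proposition~\ref{SquareIsZero} to $N$ itself to obtain $N^2=0$, which is precisely the trivial-extension structure. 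Your route is valid but outsources the final step; the paper's route shows that the conclusion already follows from the tools built in this section, without calling back to the commutative theorem.
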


\begin{proof}
Let $N$ be the prime radical of $R$, that is $N=\bigcap_{P\in\Spec(R)} P$. First we assume that $N_R$ is Noetherian and seek a contradiction. Since $N_R$ is Noetherian and $R$ is not right Noetherian, it follows that the ring $R/N$ is also not right Noetherian. Now, by Lemma~\ref{ACCsemiprimeIdeals}, there are only finitely many minimal primes in $R$, that is $N=P_1\cap\ldots\cap P_m$. This yields a natural inclusion of $R$-modules $R/N\hookrightarrow \bigoplus_{i=1}^m R/P_i$, and since $R/N$ is not right Noetherian, then there is at least one index $1\leq j\leq m$ such that $R/P_j$ is not right Noetherian.

So $R/P_j$ is a prime ring which is not right Noetherian. Its subring $(C+P_j)/P_j$ is Noetherian and hence proper. Moreover, every proper subring of $R/P_j$ containing $(C+P_j)/P_j$ is right Noetherian. This is impossible by Proposition~\ref{PrimeCaseMinlNotRightNoeth}, and it follows that $N_R$ is not Noetherian. In particular, the ring $C+N$ is not right Noetherian and therefore $C+N=R$.

Now in the case where $C$ is reduced, we obtain $C\cap N=0$ as $N$ is nil (see for instance \cite[Theorem~0.2.6]{McConnell-Robson}). Thus $R=C\oplus N$. If $M$ is a proper submodule of $N_R$, then $C\oplus M$ forms a proper subring  of $R$. In particular, $C\oplus M$ is right Noetherian and hence $M_R$ is finitely generated. Thus, by Proposition~\ref{SquareIsZero}, $N^2=0$. Now take any two elements $r_1,r_2\in R=C\oplus N$ and write $r_1=c_1+n_1$ and $r_2=c_2+n_2$, for $c_1,c_2\in C$ and $n_1,n_2\in N$. Then
\[r_1r_2=(c_1+n_1)(c_2+n_2)=(c_1c_2)+(c_1n_2+c_2n_1).\]
It follows that $R$ is ring isomorphic to the trivial extension of the commutative ring $C$ by the $C$-module $N$. In particular $R$ is commutative.
\end{proof}

We can now prove Theorem~\ref{MinimalNonNoetherian}, that is we generalise Theorem~\ref{CommutativeMinimalNonNoetherian} to the noncommutative setting.

\begin{proof}[Proof of Theorem~\ref{MinimalNonNoetherian}]
Let $R$ be a ring which is not right Noetherian and whose proper subrings are all right Noetherian. Let $C$ be the minimal subring of $R$. By Proposition~\ref{MinlNotRightNoethArtinianSub}, the ring $C$ cannot be Artinian, i.e. $R$ has characteristic zero. Then $C$ is equal to $\mathbb{Z}$, which is reduced. Thus $R$ is commutative by Proposition~\ref{MinlNotRightNoethReducedSub}. We are done by Theorem~\ref{CommutativeMinimalNonNoetherian}.
\end{proof}

\begin{remark}
We have shown that, except for the trivial extensions of $\mathbb{Z}$ by a Prüfer $p$-group $\mathbb{Z}\left(p^\infty\right)$, a ring whose proper subrings are right Noetherian must be right Noetherian itself. Following \cite{GilmerHeinzer_HereditarilyNoetherianRings}, such rings can be called \textit{hereditarily right Noetherian} and it would be interesting to describe them. They include for instance any ring which is a finitely generated $\mathbb{Z}$-module (i.e. whose additive group is finitely generated). Naturally, the centre of a hereditarily right Noetherian ring is a commutative hereditarily Noetherian ring, described in \cite{GilmerHeinzer_HereditarilyNoetherianRings}.
\end{remark}

\section{Rings whose proper Subrings are right Artinian}

The question was also answered for the commutative Artinian property by Gilmer and Heinzer, with the following result.

\begin{theorem}\label{CommutativeHereditaryArtinian}\cite[Theorem~2]{GilmerHeinzer_JonssonModules}
Let $R$ be a commutative ring and $C$ a proper subring of $R$. Suppose that every proper subring of $R$ containing $C$ is Artinian. Then $R$ is Artinian.
\end{theorem}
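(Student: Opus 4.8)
The plan is to show that $R$ is both Noetherian and of Krull dimension zero, which for a commutative ring is equivalent to being Artinian. First I would note that $C$ is itself Artinian, since $C$ is a proper subring of $R$ containing $C$. As every proper subring of $R$ containing $C$ is Artinian and hence Noetherian, the commutative case of Proposition~\ref{MinlNotRightNoethArtinianSub} (applied with this Artinian $C$) gives at once that $R$ is Noetherian. It then remains to prove that $\dim R=0$, that is, that every prime of $R$ is maximal; this dimension-theoretic step is the heart of the argument and the only genuine obstacle.

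So suppose for contradiction that $\dim R\geq 1$, and choose a prime $\mathfrak{p}$ of $R$ that is strictly contained in another prime. Then $D:=R/\mathfrak{p}$ is a domain with $\dim D\geq 1$; write $\pi\colon R\to D$ for the quotient map. The image $\pi(C)$ is a quotient of the Artinian ring $C$, hence Artinian, and it is a subring of the domain $D$, hence itself a domain; an Artinian domain is a field, so $k:=\pi(C)$ is a field. The crucial observation is now that every proper subring $T$ of $D$ with $k\subseteq T$ has $\dim T=0$. Indeed, $\pi^{-1}(T)$ is a subring of $R$ containing $C$ (since $\pi(C)=k\subseteq T$) and proper in $R$ (since $T\subsetneq D$ and $\pi$ is onto), hence Artinian by hypothesis; as $\pi$ carries $\pi^{-1}(T)$ onto $T$, the ring $T$ is a quotient of an Artinian ring and therefore $\dim T\leq\dim\pi^{-1}(T)=0$.

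To reach a contradiction I would exhibit a proper subring of $D$ containing $k$ of positive dimension. Since $\dim D\geq 1$ there is a nonzero prime $\mathfrak{q}$ of $D$, and any nonzero $x\in\mathfrak{q}$ must be transcendental over $k$: otherwise $k[x]$ would be a finite $k$-algebra that is a domain, hence a field, forcing $x$ to be a unit of $D$ and contradicting $x\in\mathfrak{q}\subsetneq D$. Fixing such a transcendental $x$, the subring $k[x^2,x^3]$ is strictly contained in the polynomial ring $k[x]\subseteq D$ (as $x\notin k[x^2,x^3]$), so it is a proper subring of $D$ containing $k$; being a finitely generated domain over $k$ with fraction field $k(x)$ of transcendence degree one, it satisfies $\dim k[x^2,x^3]=1$. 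This contradicts the conclusion of the previous paragraph, so in fact $\dim R=0$, and together with $R$ being Noetherian this yields that $R$ is Artinian.

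I expect the dimension step to be the main obstacle. The delicate point is that contracting a chain of primes of $R$ to an arbitrary subring may collapse it, so one cannot simply intersect with a convenient subring and read off the dimension. Passing to the residue domain $D=R/\mathfrak{p}$ is precisely what makes the image of $C$ a field and lets the transcendence argument locate a genuinely positive-dimensional subring, while the device $k[x^2,x^3]$ is included to guarantee that the constructed subring remains proper even in the edge case $D=k[x]$. This is the concrete content of the assertion, quoted in the introduction, that the Krull dimension of a commutative ring cannot strictly exceed that of all its subrings.
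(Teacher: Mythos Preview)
Your proof is correct. The paper does not itself prove Theorem~\ref{CommutativeHereditaryArtinian}; it quotes it from Gilmer--Heinzer. The closest in-paper comparison is the proof of the PI generalisation, Theorem~\ref{PIMinlRightArtinian}, which follows exactly the strategy you propose: use Proposition~\ref{MinlNotRightNoethArtinianSub} to get $R$ Noetherian, prove classical Krull dimension zero, and conclude Artinian (there via Proposition~\ref{PIdim0ArtinianIff}, in your commutative setting via the standard equivalence).

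Your dimension-zero step is a commutative variant of Proposition~\ref{PIExtensionClKdim0}, organised slightly differently. Both arguments pass to the domain $D=R/\mathfrak{p}$ and note that the image of $C$ is a field. The paper then picks $\bar c$ in a nonzero prime, observes that $C[c]$ and $C[c^2]$ have dimension at least one and must therefore equal $R$, deduces that $\bar c$ is algebraic over $k$, and reads off a nonzero element of $k$ in the prime from the constant term of the minimal relation, contradicting $\dim k=0$. You instead show directly that any nonzero $x$ in a prime of $D$ is transcendental over $k$ and then exhibit the proper one-dimensional subring $k[x^2,x^3]$. These are essentially dual packagings of the same idea; your $k[x^2,x^3]$ device neatly handles the edge case $D=k[x]$ in one stroke, while the paper's version is phrased so as to go through for PI rings.
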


Taking $C$ to be the minimal subring of $R$ immediately yields the following result.

\begin{corollary}\cite[Corollary~2]{GilmerHeinzer_JonssonModules}\label{CommutativeMNA}
Let $R$ be a commutative ring which is not Artinian. Suppose that every proper subring of $R$ is Artinian. Then $R$ is isomorphic to $\mathbb{Z}$.
\end{corollary}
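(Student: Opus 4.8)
The plan is to apply Theorem~\ref{CommutativeHereditaryArtinian} with $C$ chosen to be the minimal subring of $R$, namely the prime subring generated by the identity element. This subring is isomorphic to either $\mathbb{Z}$ or $\mathbb{Z}/n\mathbb{Z}$ for some $n\geq 2$, depending on the characteristic of $R$, and it is contained in every subring of $R$.

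I would split into two cases according to whether $C$ is a proper subring of $R$ or equals $R$. If $C\subsetneq R$, then since every proper subring of $R$ is Artinian by hypothesis, in particular every proper subring of $R$ containing $C$ is Artinian. Theorem~\ref{CommutativeHereditaryArtinian} then applies with this $C$ and forces $R$ to be Artinian, contradicting the standing assumption. Hence this case cannot occur.

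It remains to treat the case $C=R$, which says that $R$ coincides with its prime subring, so $R\cong\mathbb{Z}$ or $R\cong\mathbb{Z}/n\mathbb{Z}$. The latter possibilities are all finite and therefore Artinian, so they are excluded by the hypothesis that $R$ is not Artinian. We are left with $R\cong\mathbb{Z}$, which is indeed not Artinian, completing the argument.

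Since the corollary is a direct specialisation of Theorem~\ref{CommutativeHereditaryArtinian}, there is no substantial obstacle. The only point requiring a moment's care is ensuring that the chosen $C$ is genuinely a \emph{proper} subring before invoking the theorem, which is exactly what the case distinction between $C=R$ and $C\subsetneq R$ handles.
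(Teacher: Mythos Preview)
Your proposal is correct and follows exactly the approach indicated in the paper, which simply says the corollary follows from Theorem~\ref{CommutativeHereditaryArtinian} by taking $C$ to be the minimal subring of $R$. You have merely made explicit the case distinction between $C\subsetneq R$ (where the theorem applies and gives a contradiction) and $C=R$ (where $R$ is its own prime subring and must be $\mathbb{Z}$), which is the natural way to unpack that one-line remark.
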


In the noncommutative context, we start by noting that the Noetherian case yields the following immediate consequence.

\begin{lemma}\label{MinimalNonArtinianIsNoetherian}
Suppose that every proper subring of a ring $R$ is right Artinian. Then $R$ is right Noetherian.
\end{lemma}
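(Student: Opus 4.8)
The plan is to bootstrap from the Noetherian classification already established in Theorem~\ref{MinimalNonNoetherian}. The key first observation is that every right Artinian ring is right Noetherian, by the Hopkins--Levitzki theorem. Hence the standing hypothesis that every proper subring of $R$ is right Artinian immediately upgrades to the statement that every proper subring of $R$ is right Noetherian. This is exactly the hypothesis of Theorem~\ref{MinimalNonNoetherian}.

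I would then split into two cases according to whether $R$ itself is right Noetherian. If it is, there is nothing left to prove. Otherwise $R$ is a non-right-Noetherian ring all of whose proper subrings are right Noetherian, so Theorem~\ref{MinimalNonNoetherian} applies and forces $R$ to be the trivial extension of $\mathbb{Z}$ by the Prüfer $p$-group $\mathbb{Z}(p^\infty)$ for some prime $p$. The remaining task is to rule out this possibility by contradicting the assumption on proper subrings.

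To do so I would exhibit a proper subring of the trivial extension that fails to be right Artinian. Since $R=\mathbb{Z}\oplus\mathbb{Z}(p^\infty)$ has characteristic zero, its prime subring is a copy of $\mathbb{Z}$, namely $\{(n,0):n\in\mathbb{Z}\}$. This subring is proper, because the component $\mathbb{Z}(p^\infty)$ is nonzero, and it is not right Artinian, as witnessed by the non-terminating descending chain $2\mathbb{Z}\supsetneq 4\mathbb{Z}\supsetneq 8\mathbb{Z}\supsetneq\cdots$. This contradicts the hypothesis that every proper subring of $R$ is right Artinian, so the trivial extension case cannot occur and $R$ must be right Noetherian.

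I expect essentially no obstacle: the entire content is carried by Theorem~\ref{MinimalNonNoetherian} together with Hopkins--Levitzki. The only point requiring a moment's care is checking that the offending copy of $\mathbb{Z}$ is genuinely proper and genuinely non-Artinian, both of which are immediate from the structure of the trivial extension.
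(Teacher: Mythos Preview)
Your proposal is correct and follows essentially the same approach as the paper: pass from right Artinian to right Noetherian on proper subrings, invoke Theorem~\ref{MinimalNonNoetherian}, and rule out the trivial extension $\mathbb{Z}\oplus\mathbb{Z}(p^\infty)$ by observing that its proper subring $\mathbb{Z}$ is not Artinian. The paper's version is just more terse, omitting the explicit mention of Hopkins--Levitzki and the descending chain in $\mathbb{Z}$.
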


\begin{proof}
Every proper subring of $R$ is right Artinian, and hence right Noetherian. By Theorem~\ref{MinimalNonNoetherian}, $R$ is either right Noetherian or the trivial extension of $\mathbb{Z}$ by $\mathbb{Z}\left(p^\infty\right)$ for some prime $p$. But this trivial extension strictly contains $\mathbb{Z}$ as a subring, which is not Artinian. It follows that $R$ is right Noetherian.
\end{proof}

We now prove Theorem~\ref{MinimalNonArtinian}, that is we generalise Corollary~\ref{CommutativeMNA} to noncommutative rings.

\begin{proof}[Proof of Theorem~\ref{MinimalNonArtinian}]
Let $R$ be a ring which is not right Artinian and whose proper subrings are all right Artinian. We assume that $R$ is not $\mathbb{Z}$ and seek a contradiction. If the characteristic of $R$ is zero, then $R$ contains a copy of $\mathbb{Z}$ as a proper subring, which contradicts our hypothesis. Therefore $R$ has finite characteristic. By Lemma~\ref{MinimalNonArtinianIsNoetherian}, $R$ is right Noetherian. Take a two-sided ideal $I$ of $R$ maximal with respect to $R/I$ not being right Artinian. Then the ring $R/I$ satisfies the hypothesis of the theorem. So without loss of generality we can take $R$ to be $R/I$. Furthermore, every proper homomorphic image of $R/I$ is right Artinian. We will show this yields a contradiction.

We first show that $R$ has prime characteristic. Write $p_1^{l_1}\cdots p_n^{l_n}$ for the prime factorisation of the characteristic of $R$. We have a ring isomorphism between the minimal subring $\mathbb{Z}/(p_1^{l_1}\cdots p_n^{l_n})\mathbb{Z}$ and $\Pi_{i=1}^n\mathbb{Z}/p_i^{l_i}\mathbb{Z}$. The preimage of $(1,0,\ldots,0)$ under this isomorphism gives us a nonzero central idempotent $e$ in $R$. Thus $R=Re\oplus R(1-e)$ as a direct sum of ideals. If $e\neq 1$, then both $R/Re\cong R(1-e)$ and $R/R(1-e)\cong Re$ are proper homomorphic images of $R$, and hence Artinian as right $R$-modules. In this case, $R_R$ is a direct sum of Artinian modules and is hence right Artinian as a ring, contradicting our hypothesis. Therefore $e=1$, that is the characteristic of $R$ is a prime power $p^l$.

Now consider the minimal subring $\mathbb{Z}/p^l\mathbb{Z}$ of $R$. If $R=\mathbb{Z}/p^l\mathbb{Z}+pR$ then $R=\mathbb{Z}/p^l\mathbb{Z}+p(\mathbb{Z}/p^l\mathbb{Z}+pR)=\mathbb{Z}/p^l\mathbb{Z}+p^2R$ and repeating the substitution yields $R=\mathbb{Z}/p^l\mathbb{Z}+p^lR=\mathbb{Z}/p^l\mathbb{Z}$, which is impossible as $R$ is not right Artinian. Thus $\mathbb{Z}/p^l\mathbb{Z}+pR$ is a proper subring of $R$. In particular it is right Artinian. Therefore the right $(\mathbb{Z}/p^l\mathbb{Z}+pR)$-module $pR$ is Artinian, and it follows that $pR$ is Artinian as a right $R$-module. The short exact sequence
\[0\to pR\to R\to R/pR\to 0\]
then implies that $R/pR$ is not right Artinian. This is only possible if $R/pR$ is not a proper homomorphic image of $R$, i.e. if $pR=0$. Therefore the characteristic of $R$ is $p$.

Now, $R$ is an $F_p = \mathbb{Z}/p\mathbb{Z}$-algebra which is not right Artinian and such
that every proper subring is right Artinian. Take an infinite descending chain $R \supsetneq A_1 \supsetneq
A_2 \supsetneq\ldots$ of right ideals of $R$. The right $R$-module $A_2$ is not Artinian, so the ring $F_p + A_2$
is not right Artinian, hence $F_p + A_2 = R$. Take $x$ in $A_1\setminus A_2$. Then $x = u + a$ for some $a$ in
$A_2$ and nonzero $u$ in $F_p$. But $u=x-a$ belongs to $A_1$, which is impossible as $F_p$ is a field
and $A_1$ is a proper ideal. This is the required contradiction, and it follows that $R$ is isomorphic to $\mathbb{Z}$.
\end{proof}

The proof of \cite[Theorem~2]{GilmerHeinzer_JonssonModules} uses classical Krull dimension. It is not as well behaved in the general noncommutative setting, and our proof of Theorem~\ref{MinimalNonArtinian} followed a different approach. In the next section, we recover some of the results of \cite{GilmerHeinzer_JonssonModules} in the context of rings satisfying a polynomial identity (PI).

\section{Classical Krull Dimension 0 in PI rings}

\begin{definition}\cite[Definition~1.7.27]{RowenBook}
Let $R$ be a ring, and $W$ a subring of $R$. Call $R$ an \textit{extension} of $W$ if $R=W\C_R(W)$, where $\C_R(W)$ is the centraliser of $W$ in $R$, i.e. elements of $R$ that commute with $W$.
\end{definition}

Of course, every ring is an extension of any of its central subrings. We will use the following fact repeatedly.

\begin{remark}\cite[Remark 1.9.5]{RowenBook}
Let $R$ be an extension of a subring $W$. If $P$ is a prime ideal of $R$, then $W\cap P$ is a prime ideal of $W$.
\end{remark}


For $r\in R$ we denote by $W(r)$ the subring of $R$ generated by $W$ and $r$. The following is a generalisation to PI rings of the case $n=0$ of \cite[Theorem~1]{GilmerHeinzer_JonssonModules}.

\begin{proposition}\label{PIExtensionClKdim0}
Let $R$ be a PI ring that is an extension of a proper subring $W$. Suppose that every proper subring of $R$ of the form $W(r)$ has classical Krull dimension zero. Then the classical Krull dimension of $R$ is also zero.
\end{proposition}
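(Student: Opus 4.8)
The plan is to argue by contradiction: assume $\Cl.K.dim(R)\geq 1$ and manufacture a \emph{proper} subring $S$ of $R$ that is affine over $W$ and satisfies $\Cl.K.dim(S)\geq 1$, contradicting the hypothesis. Fix a chain of primes $P\subsetneq Q$ in $R$ and pass to the prime quotient $\bar R:=R/P$, writing $\bar W$ for the image of $W$ and $\bar Q:=Q/P$ for the resulting nonzero prime of $\bar R$. Since $R=W\,\C_R(W)$, applying the quotient map gives $\bar R=\bar W\,\C_{\bar R}(\bar W)$, so $\bar R$ is again a PI ring that is an extension of $\bar W$. The key reduction is that it suffices to produce a nonzero central element $z\in\bar Q\cap\Z(\bar R)$ with $\bar W[z]\subsetneq\bar R$: lifting $z$ to some $t\in R$, the subring $S:=W[t]$ is affine over $W$, its image in $\bar R$ is $\bar W[z]$, and so $S\neq R$ while $\Cl.K.dim(S)\geq\Cl.K.dim\bigl(S/(S\cap P)\bigr)=\Cl.K.dim(\bar W[z])$.

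To see that any such $\bar W[z]$ already carries a chain, I use that $z$ is central, whence $\bar R$ is an extension of $\bar W[z]$; then by \cite[Remark~1.9.5]{RowenBook} the contractions of the primes $(0)$ and $\bar Q$ to $\bar W[z]$ are again prime, and they are distinct because $z$ lies in the second but not the first. The existence of a nonzero central $z\in\bar Q$ is exactly Rowen's theorem that a nonzero ideal of a prime PI ring meets the centre nontrivially \cite{RowenBook}. I also dispose at once of the degenerate case $\bar W=\bar R$: then $R=W+P$, and the modular law gives $Q=(Q\cap W)+P$, so $\bar Q\neq 0$ forces $P\cap W\subsetneq Q\cap W$; as these are primes of $W$, the ring $W$ is a proper subring affine over $W$ of positive classical Krull dimension, a contradiction. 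So we may assume $\bar W\subsetneq\bar R$.

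The main obstacle is therefore securing the properness $\bar W[z]\subsetneq\bar R$, i.e.\ ruling out that $\bar W[z]=\bar R$ for every nonzero central $z\in\bar Q$. I would split according to the behaviour of $z$ over $\bar W$. If some such $z$ is transcendental over $\bar W$, then, as $z$ is central, $\bar W[z]\cong\bar W[X]$ and $\bar W[z^2]\cong\bar W[X^2]\subsetneq\bar W[X]$ is proper while still containing the nonzero central element $z^2\in\bar Q$, so $\bar W[z^2]$ does the job. If instead $z$ is integral over $\bar W$, take a monic relation $z^d+a_{d-1}z^{d-1}+\cdots+a_0=0$ of least degree; since $z$ is a nonzero central element of a prime ring it is a non-zero-divisor, so minimality forces $a_0\neq 0$, and $a_0=-z(z^{d-1}+\cdots+a_1)\in\bar Q\cap\bar W$. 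This makes $\bar Q\cap\bar W$ a nonzero prime strictly above $(0)\cap\bar W$, again forcing $\Cl.K.dim(W)\geq 1$ and a contradiction.

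The genuinely delicate case, which I expect to be the crux, is when $z$ is algebraic but \emph{not} integral over $\bar W$. The classical device is to replace $z$ by $a_m z$, where $a_m$ is the leading coefficient of an algebraic relation, which then becomes integral over $\bar W$; the difficulty in the noncommutative setting is that clearing the leading coefficient in this way requires the coefficients of the relation to be central, so one must first arrange that $z$ satisfies a relation with central coefficients. This is where I would invoke PI-integrality --- that a central element of a prime PI ring is integral over a suitable central subring \cite{RowenBook} --- to reduce the algebraic case to the integral case already treated. Once that reduction is in place, every branch closes on a contradiction, and we conclude $\Cl.K.dim(R)=0$.
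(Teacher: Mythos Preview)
Your overall architecture is the paper's: pick a nonzero central $z\in\bar Q$ via Rowen's theorem, use the extension property to contract the chain $0\subsetneq\bar Q$ to $\bar W[z]$, lift to $W[t]$, and force $W[t]=R$ by the hypothesis. The only substantive divergence is your three-way split into transcendental, integral, and algebraic-non-integral, and your belief that the last case is the crux requiring PI-integrality machinery. It is not: your own argument for the integral case never uses monicity. Take any relation $\sum_{i=0}^{l} w_i z^i=0$ with $w_i\in\bar W$ not all zero and $l$ minimal. Then $w_0=-z\bigl(\sum_{i=1}^{l} w_i z^{i-1}\bigr)\in\bar Q$, and if $w_0=0$ the regularity of $z$ (central, nonzero, in a prime ring) cancels $z$ to give a relation of lower degree, contradicting minimality. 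So $0\neq w_0\in\bar Q\cap\bar W$ and you are done exactly as in your integral case. This is precisely what the paper does; no reduction to the monic case, and hence no PI-integrality, is needed.

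The paper also collapses your transcendental branch and your degenerate case $\bar W=\bar R$ into the same argument. Rather than asking whether $z$ is transcendental, it simply observes that the reasoning giving $W[c]=R$ applies verbatim to $c^2$ (since $\bar c^{\,2}$ is again a nonzero central element of $\bar Q$), so $W[c^2]=R$ as well; passing to $\bar R$ gives $\bar c\in\bar W[\bar c^{\,2}]$, which is already an algebraic relation for $\bar c$ over $\bar W$. Thus the transcendental case cannot occur, and the degenerate case $\bar c\in\bar W$ is just the degree-one instance $1\cdot\bar c+(-\bar c)=0$ with $w_0=-\bar c\neq 0$. So your proof can be shortened to a single line of argument, and the machinery you were reaching for is unnecessary.
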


\begin{proof}
Let us assume that there are prime ideals $P_0\subsetneq P_1$ in $R$. We seek a contradiction. Since $R/P_0$ is a prime PI ring, the nontrivial ideal $P_1/P_0$ contains a nonzero central element, $\bar{c}$ say (see for instance \cite[Theorem~1.6.27]{RowenBook}). Now, $R$ is an extension of $W$ so $R/P_0$ is an extension of $W':=(W+P_0)/P_0$. Moreover since $\bar{c}$ is central, $R/P_0$ also is an extension of $W'\left(\bar{c}\right)$. In particular, the prime ideal $P_1/P_0$ induces a prime ideal $P_1':=(P_1/P_0)\cap W'\left(\bar{c}\right)$ of $W'\left(\bar{c}\right)$. The ring $W'\left(\bar{c}\right)$ is prime, and its prime ideal $P_1'$ is nonzero as it contains $\bar{c}\neq 0$. Thus the classical Krull dimension of $W'\left(\bar{c}\right)$ is at least one.

Taking $c$ in the preimage of $\bar{c}$ under the natural projection $R\twoheadrightarrow R/P_0$, we have $\Cl.K.dim W(c)\geq \Cl.K.dim W'\left(\bar{c}\right)\geq 1$. It follows by hypothesis that $W(c)=R$. We can show similarly that $W\left(c^2\right)=R$. In particular, $c$ lies in $W\left(c^2\right)$ and therefore $\bar{c}$ belongs to $W'\left(\bar{c}^2\right)$. Thus, using that $\bar{c}$ is central, it must be a root of a nonzero polynomial with coefficients in $W'$. Let $l\geq 1$ be minimal such that there exist $w_0,\ldots,w_l\in W'$ not all zero and satisfying $\sum_{i=0}^l w_i\bar{c}^i=0$. Then $-w_0=\bar{c}\left(\sum_{i=0}^{l-1}w_{i+1}\bar{c}^i\right)$ belongs to $P_1/P_0$, as $\bar{c}$ does. Moreover, since $\bar{c}$ is a central element in the prime ring $R/P_0$, it must be regular. Then, by minimality of $l$, it follows that $w_0$ is not zero and thus the prime ideal $W'\cap (P_1/P_0)$ is nontrivial. Therefore
\[1\leq \Cl.K.dim W'=\Cl.K.dim W/(W\cap P_0)\leq \Cl.K.dim W =0\]
which is absurd, showing that the classical Krull dimension of $R$ is zero.
\end{proof}

\begin{proposition}\label{PIdim0ArtinianIff}
Let $R$ be a PI ring with classical Krull dimension zero. The following are equivalent:
\begin{enumerate}[topsep=0pt]
\item[(i)]$R$ is right Artinian;
\item[(ii)]the right Krull dimension of $R$ exists;
\item[(iii)]every ideal of $R$ is finitely generated as a right $R$-module.
\end{enumerate}
\end{proposition}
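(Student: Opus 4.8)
The plan is to establish (i)$\Rightarrow$(ii), (i)$\Rightarrow$(iii), (iii)$\Rightarrow$(i) and (ii)$\Rightarrow$(i), which together make all three conditions equivalent to (i). The two implications out of (i) are immediate. A right Artinian ring satisfies the descending chain condition on right ideals, so the lattice of submodules of $R_R$ has deviation $0$ and $\K(R_R)$ exists, giving (ii). By the Hopkins--Levitzki theorem a right Artinian ring is right Noetherian, so every right ideal, and in particular every two-sided ideal, is finitely generated as a right module, giving (iii).

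Both remaining implications pass through the same structural description, which I would extract first from the two standing hypotheses. Because the classical Krull dimension of $R$ is zero, there is no strict inclusion between prime ideals; since any proper ideal lies inside a maximal (hence prime) ideal, every prime $P$ must in fact be a maximal two-sided ideal, so each $R/P$ is a simple ring, and being PI it is simple Artinian by Kaplansky's theorem. Suppose for the moment that $\Spec(R)=\{P_1,\dots,P_m\}$ is finite; these are pairwise comaximal maximal ideals whose intersection is the prime radical $N$, so the Chinese Remainder Theorem yields $R/N\cong\prod_{i=1}^m R/P_i$, a semisimple Artinian ring. It then remains to climb the filtration $R\supseteq N\supseteq N^2\supseteq\cdots$: if $N$ is nilpotent, say $N^s=0$, and each factor $N^i/N^{i+1}$ has finite length as a right $R/N$-module, then $R_R$ is an iterated extension of Artinian modules and is therefore right Artinian. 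Thus in each direction the work reduces to three points: finiteness of $\Spec(R)$, nilpotency of $N$, and finite length of the factors $N^i/N^{i+1}$ over the semisimple Artinian ring $R/N$.

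Under (iii) these three points are elementary. The condition forces the ascending chain condition on two-sided ideals, since the union of a chain of ideals is an ideal, hence finitely generated as a right module, and its finitely many generators already appear at a single stage. ACC on ideals yields finitely many minimal primes (here all of $\Spec(R)$, as recalled at the end of the proof of Lemma~\ref{ACCsemiprimeIdeals}) and, by the usual maximal-nilpotent-ideal argument, nilpotency of $N$. Finally each power $N^i$ is a two-sided ideal, so by (iii) it is finitely generated as a right module, whence $N^i/N^{i+1}$ is a finitely generated, and therefore finite length, module over the semisimple Artinian ring $R/N$. Under (ii) the same three points rest on deeper theory: $R/N$ is semiprime PI and $(R/N)_{R/N}$, being a quotient of $R_R$, has Krull dimension, so by a theorem of Gordon and Robson it is right Goldie and hence has finitely many minimal primes, giving finiteness of $\Spec(R)$; nilpotency of $N$ follows from Lenagan's theorem that the nil radical of a ring with right Krull dimension is nilpotent; and each $N^i/N^{i+1}$, being a subquotient of $R_R$, has Krull dimension, so as a semisimple $R/N$-module it must have finite length, since an infinite semisimple module has no Krull dimension.

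The main obstacle I expect is exactly the control of the prime radical in direction (ii). Unlike (iii), merely possessing a Krull dimension supplies no chain condition on ideals, and a module with Krull dimension need not be Noetherian, so neither the nilpotency of $N$ nor the finiteness of $\Spec(R)$ can be obtained by hand; both genuinely rely on the Gordon--Robson and Lenagan theory of rings with Krull dimension. The identification of the semisimple factors $N^i/N^{i+1}$ as finite length, via the failure of Krull dimension for infinite semisimple modules, is the delicate final ingredient tying that direction back to the common structural argument.
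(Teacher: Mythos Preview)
Your proposal is correct, but it takes a more structural route than the paper on both nontrivial implications. For (ii)$\Rightarrow$(i), the paper simply invokes the identity $\K(R_R)=\sup\{\K(R/P):P\in\Spec(R)\}$ from \cite[Corollary~6.3.8]{McConnell-Robson}: since every prime is maximal and each $R/P$ is simple PI, hence Artinian by Kaplansky, the supremum is zero and $R$ is right Artinian in one line. Your argument via Lenagan's theorem, the Gordon--Robson Goldie result, and finite uniform dimension of the semisimple layers $N^i/N^{i+1}$ is essentially the machinery sitting behind that cited identity, so you are unpacking it rather than bypassing it. For (iii)$\Rightarrow$(i) the divergence is more genuine: the paper never builds the filtration by powers of $N$, but instead uses ACC on ideals to choose $P$ maximal with respect to $R/P$ not being right Artinian and shows directly that $P$ is prime (for ideals $A,B\supsetneq P$ the module $A/AB$ is finitely generated over the right Artinian ring $R/B$, so $R/AB$ is right Artinian and $AB\not\subset P$), after which $R/P$ is simple PI and Artinian by Kaplansky, a contradiction. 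The paper's proofs are shorter, resting on one citation and a maximality trick; yours yields an explicit finite-length filtration of $R_R$ and makes the role of the semisimple Artinian quotient $R/N$ transparent.
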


\begin{proof}
That (i) implies (ii) and (iii) is trivial. Now if $R$ has right Krull dimension $\K(R_R)$, then $\K(R_R)=\sup\lbrace \K(R/P):P\in\Spec(R)\rbrace$ (see for instance \cite[Corollary 6.3.8]{McConnell-Robson}). This is zero as every prime ideal of $R$ is maximal, and the simple PI rings $R/P$ are Artinian by Kaplansky's Theorem. Thus (ii) implies (i).

Finally, we prove that (iii) implies (i). We assume that $R$ is not right Artinian, and seek a contradiction. Since $R$ has ACC on ideals, we can take an ideal $P$ of $R$ maximal with respect to $R/P$ not being right Artinian. Let $A,B$ be ideals strictly containing $P$. Since $A_R$ is finitely generated, it follows that $A/AB$ is finitely generated as a right module over $R/B$. As $R/B$ is right Artinian, this implies that $A/AB$ is Artinian as a right $R$-module. Thus, the short exact sequence of right $R$-modules $A/AB\hookrightarrow R/AB \twoheadrightarrow R/A$ implies that the ring $R/AB$ is right Artinian. Therefore $AB$ is not contained in $P$, and we have shown that $P$ is prime. The classical Krull dimension of $R$ is zero so $P$ is maximal, and $R/P$ is a simple PI ring, which is Artinian by Kaplansky's Theorem, contradicting our choice of $P$.
\end{proof}

Lastly, we prove Theorem~\ref{PIMinlRightArtinian}, i.e. we generalise Theorem~\ref{CommutativeHereditaryArtinian} to PI rings.

\begin{proof}[Proof of Theorem~\ref{PIMinlRightArtinian}]
Let $R$ be a PI ring and $C$ a central proper subring of $R$ such that every proper subring of $R$ containing $C$ is right Artinian. Then every proper subring of $R$ containing $C$ has classical Krull dimension zero. Since $R$ is an extension of $C$, this implies that the classical Krull dimension of $R$ is also zero by Proposition~\ref{PIExtensionClKdim0}. By Proposition~\ref{MinlNotRightNoethArtinianSub}, $R$ is right Noetherian. Thus $R$ satisfies condition (iii) of Proposition~\ref{PIdim0ArtinianIff} and it follows that $R$ is right Artinian.
\end{proof}
\bibliography{Rings_Whose_Subrings_are_all_Noetherian_or_Artinian}
\bibliographystyle{abbrv}

\begin{minipage}{0.5\textwidth}
School of Mathematical and Physical Sciences\\
University of Sheffield\\
Hicks Building\\
Sheffield S3 7RH\\
United Kingdom\\
email: nlblacher@sheffield.ac.uk
\end{minipage}
\end{document}